\documentclass[graybox,envcountsect,envcountsame]{svmult}

\special{ps:
/setdistillerparams where {pop}{userdict /setdistillerparams {pop} put}ifelse
<< /NeverEmbed [ ] >> setdistillerparams
}

\usepackage[latin1]{inputenc}
\usepackage[OT2,T1]{fontenc}
\usepackage[francais]{babel}
\usepackage{amsmath,amssymb}

\usepackage{mathptmx}
\usepackage{helvet}
\usepackage{courier}
\usepackage{makeidx}
\usepackage{graphicx}
\usepackage{multicol}
\usepackage[bottom]{footmisc}

\newcommand{\mylangle}{\xy(0,0),(1.17,1.485)**[|(1.025)]@{-},(0,0),(1.17,-1.485)**[|(1.025)]@{-}\endxy\mkern2mu}
\newcommand{\myrangle}{\mkern2mu\xy(0,0),(-1.17,1.485)**[|(1.025)]@{-},(0,0),(-1.17,-1.485)**[|(1.025)]@{-}\endxy}

\usepackage[ps,dvips,arrow,matrix,tips,line,curve]{xy}
\entrymodifiers={+!!<0pt,\the\fontdimen22\textfont2>}
\SelectTips{cm}{10}

\usepackage{tocvsec2}
\usepackage[pdfborder={0 0 0}]{hyperref}

\newcommand{\pr}{\mathrm{pr}_1}
\newcommand{\Gm}{\mathbf{G}_\mathrm{m}}
\newcommand{\kbar}{{\mkern1mu\overline{\mkern-1mu{}k\mkern-1mu}\mkern1mu}}
\newcommand{\Kbar}{{\mkern1mu\overline{\mkern-1mu{}K\mkern-1mu}\mkern1mu}}
\newcommand{\mmu}{{\boldsymbol{\mu}}}
\newcommand{\Qbarre}{{\mkern1mu\overline{\mkern-1mu{}\Q\mkern-1mu}\mkern1mu}}
\newcommand{\Xbarre}{{\mkern.4mu\overline{\mkern-.4mu{}X\mkern-1mu}\mkern1mu}}
\newcommand{\Kbarre}{{\mkern.4mu\overline{\mkern-.4mu{}K\mkern-1mu}\mkern1mu}}
\newcommand{\Vbarre}{{\mkern.4mu\overline{\mkern-.4mu{}V\mkern-1mu}\mkern1mu}}
\newcommand{\Spec}{\mathop\mathrm{Spec}\nolimits}
\newcommand{\Gal}{\mathop\mathrm{Gal}\nolimits}
\newcommand{\Pic}{\mathop\mathrm{Pic}\nolimits}
\newcommand{\Br}{\mathop\mathrm{Br}\nolimits}

\newcommand{\Ker}{\mathop\mathrm{Ker}\nolimits}

\newcommand{\A}{{\mathbf A}}
\newcommand{\Q}{{\mathbf Q}}
\newcommand{\Z}{{\mathbf Z}}
\renewcommand{\P}{{\mathbf P}}

\renewcommand{\geq}{\geqslant}
\renewcommand{\emptyset}{\varnothing}
\DeclareMathOperator{\inv}{inv}
\numberwithin{equation}{section}
\mathcode`A="7041 \mathcode`B="7042 \mathcode`C="7043 \mathcode`D="7044
\mathcode`E="7045 \mathcode`F="7046 \mathcode`G="7047 \mathcode`H="7048
\mathcode`I="7049 \mathcode`J="704A \mathcode`K="704B \mathcode`L="704C
\mathcode`M="704D \mathcode`N="704E \mathcode`O="704F \mathcode`P="7050
\mathcode`Q="7051 \mathcode`R="7052 \mathcode`S="7053 \mathcode`T="7054
\mathcode`U="7055 \mathcode`V="7056 \mathcode`W="7057 \mathcode`X="7058
\mathcode`Y="7059 \mathcode`Z="705A
\renewcommand{\Omega}{\varOmega}
\renewcommand{\Gamma}{\varGamma}
\hyphenpenalty=500
\pretolerance=515
\smartqed
{\setbox0\hbox{$ $}}\fontdimen16\textfont2=\fontdimen17\textfont2

\begin{document}

\title*{Une remarque sur les courbes de Reichardt--Lind et de Schinzel}

\author{Olivier Wittenberg}
\institute{Date: 2 mai 2010.
\at Olivier Wittenberg \at D\'epartement de math\'ematiques et applications,
\'Ecole normale sup\'erieure,
45~rue d'Ulm,
75230~Paris Cedex~05,
France, \email{wittenberg@dma.ens.fr}}

\makeatletter
\providecommand*{\toclevel@author}{0}
\providecommand*{\toclevel@title}{0}
\makeatother
\setcounter{tocdepth}{-1}
\maketitle
\setcounter{tocdepth}{1}

\section{Introduction}
\renewcommand{\theequation}{\thesection.\arabic{equation}}

Les courbes de Reichardt--Lind~\cite{reichardt} \cite{lind} et de Schinzel~\cite{schinzel} sont deux exemples célèbres de courbes projectives et lisses~$X$ sur~$\Q$
possédant des points réels et des points $p$\nobreakdash-adiques pour tout premier~$p$, mais pas de point rationnel ni même de diviseur de degré~$1$.
La première est la courbe de genre~$1$ intersection dans~$\P^3$ des quadriques d'équations
$w^2=xz$
et $2y^2 = x^2 - 17 z^2$.   La seconde est de genre~$3$: il s'agit de la courbe quartique plane d'équation $x^4-17z^4=2(y^2+4z^2)^2$.
Dans cette note, à l'aide des résultats de~\cite{ew} concernant l'algébricité des classes de cycles de sections du groupe fondamental arithmétique sur les corps locaux,
nous établissons le théorème suivant:

\begin{theorem}
\label{th:intro}
Soit~$X$ la courbe de Reichardt--Lind ou la courbe de Schinzel.  La suite exacte fondamentale
\begin{align}
\label{eq:suitefond}\tag{$*$}
\xymatrix{
1 \ar[r] & \pi_1(X \otimes_\Q \Qbarre) \ar[r] & \pi_1(X) \ar[r] & \Gal(\Qbarre/\Q) \ar[r] & 1
}
\end{align}
n'est pas scindée.
\end{theorem}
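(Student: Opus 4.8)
The plan is to argue by contradiction, turning a hypothetical section into a ``cohomological divisor class of degree~$1$'' on~$X$ that is everywhere locally algebraic, and then contradicting the fact that~$X$ carries no such object. So assume that~\eqref{eq:suitefond} admits a section $s\colon\Gal(\Qbarre/\Q)\to\pi_1(X)$. Composing $s$ with a decomposition subgroup at a place~$v$ of~$\Q$ gives a section $s_v$ of the analogous sequence over~$\Q_v$. Following~\cite{ew} one attaches to $s$ its cycle class $\widehat\theta(s)$ in the \'etale cohomology group $H^2(X,\widehat\Z(1))$; the features needed are that its image under $H^2(X,\widehat\Z(1))\to H^2(X\otimes_\Q\Qbarre,\widehat\Z(1))=\widehat\Z$ equals~$1$, and that the construction commutes with base change, so that $\widehat\theta(s)$ restricts to $\widehat\theta(s_v)$ over each~$\Q_v$. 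The main input of~\cite{ew} is that, $\Q_v$ being a local field, $\widehat\theta(s_v)$ is \emph{algebraic}: it lies in the image of $\Pic(X\otimes_\Q\Q_v)$ in $H^2(X\otimes_\Q\Q_v,\widehat\Z(1))$, equivalently its image in the Tate module $T\Br(X\otimes_\Q\Q_v)$ vanishes. As $X$ has a $\Q_v$-point, one can then choose an honest divisor class $D_v\in\Pic(X\otimes_\Q\Q_v)$ of degree~$1$ with cycle class $\widehat\theta(s_v)$; the archimedean place requires a separate but easy verification, everything there being $2$-torsion.

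The next step is global duality. Cup-product and the trace map for $X\to\Spec\Q$ provide a pairing $H^2(X,\widehat\Z(1))\times\Br(X)\to\Br(\Q)$, compatible with localisation and, on algebraic classes, with the evaluation pairing $\Pic(X\otimes_\Q\Q_v)\times\Br(X\otimes_\Q\Q_v)\to\Br(\Q_v)$ given by corestricting the Brauer classes of closed points. For $A\in\Br(X)$ the global class $\langle\widehat\theta(s),A\rangle\in\Br(\Q)$ has vanishing sum of local invariants by the reciprocity law, while its component at~$v$ equals $\langle\widehat\theta(s_v),A\rangle=\langle D_v,A\rangle$; hence $\sum_v\inv_v\langle D_v,A\rangle=0$ for every $A\in\Br(X)$. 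Thus the section~$s$ yields an adelic family $(D_v)_v$ of local degree-$1$ divisor classes on~$X$ that is orthogonal to the whole of $\Br(X)$. No such family exists: the Brauer--Manin obstruction to a degree-$1$ divisor class is non-trivial both on the Reichardt--Lind curve---where it is the classical obstruction of Reichardt and Lind, $\Pic^1(X)$ being isomorphic to~$X$ as a torsor under the Jacobian, so that the statement reduces to $X(\A_\Q)^{\Br}=\emptyset$---and, by the analogous known computation, on the Schinzel curve. This contradiction proves that~\eqref{eq:suitefond} is not split.

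The step I expect to be the main obstacle is the bookkeeping between cohomology and divisors: extracting from the everywhere-locally-algebraic class $\widehat\theta(s)$ an honest adelic degree-$1$ divisor class, and verifying that the pairing with the torsion group $\Br(X)$ is computed at each finite level by evaluation on divisors and is compatible with global reciprocity, all while controlling the discrepancy between $\Pic$ and its profinite completion $\varprojlim_n\Pic/n\Pic$. The archimedean place and the precise identification of the arithmetic input on each curve are further points requiring care, but they should not be serious difficulties once the formalism of~\cite{ew} is in hand.
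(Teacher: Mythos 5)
Your overall architecture (section $\Rightarrow$ local sections $\Rightarrow$ local cycle classes $\Rightarrow$ adelic degree-$1$ divisor classes orthogonal to a Brauer class $\Rightarrow$ contradiction with reciprocity and the Brauer--Manin obstruction) is exactly the paper's, but the step you flag as ``the main input of \cite{ew}'' is stated in a form that is not available, and this is precisely where the real difficulty of the proof lies. The algebraicity theorem of \cite{ew} for the cycle class of a section over a local field $k_v$ is proved only for the part of the class with coefficients $\mmu_N$ where $v$ \emph{does not divide} $N$ (plus the real places). The full profinite class $\widehat\theta(s_v)\in H^2(X\otimes_\Q\Q_v,\widehat\Z(1))$ is \emph{not} known to be algebraic when $v=p$ and one looks at the $p$-primary component; equivalently, the vanishing of its image in the Tate module of $\Br(X\otimes_\Q\Q_v)$ is open at the residue characteristic. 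Since the Brauer class carrying the obstruction on the Reichardt--Lind curve is the $2$-torsion class $(y,17)$, your pairing argument needs the mod-$2$ cycle class at the place $v=2$, which is exactly the excluded case. As written, your choice of $D_2$ with $c(D_2)=\widehat\theta(s_2)$ is unjustified, and the identity $\langle\widehat\theta(s_2),A\rangle=\langle D_2,A\rangle$ collapses.

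The paper circumvents this by never asserting algebraicity at $v\mid N$. Instead it isolates a hypothesis (T): at such $v$ the localisation $\gamma_v$ of the chosen lift $\gamma\in H^2(X,\mmu_N)$ of the Brauer class must decompose as a constant class $\gamma_{v,0}\in H^2(\Q_v,\mmu_N)$ plus the cycle class of a \emph{torsion} element of $\Pic(X\otimes_\Q\Q_v)$. Then $\langle\alpha_v-c(D_v),\gamma_{v,0}\rangle=0$ by a Hochschild--Serre filtration argument ($F^1\smile F^2\subset F^3H^4=0$), and $\langle\alpha_v,c(\gamma_{v,1})\rangle=0$ because $\alpha_v$ lifts to $\varprojlim_m H^2(X\otimes_\Q\Q_v,\mmu_m)$, so its Brauer image is infinitely divisible and therefore kills $\Pic_{\mathrm{tors}}$; an arbitrary $D_v\in\Pic^1(X\otimes_\Q\Q_v)$ then suffices at $v=2$. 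Verifying (T) for the Reichardt--Lind curve is the content of the whole of Section~3: one shows that $\gamma$ dies in $H^2(\Xbarre,\Z/2\Z)$, that it dies on $V\otimes_\Q\Q_2$ because $17$ is a square in $\Q_2$, and that the kernel of restriction to $V$ consists of cycle classes of torsion divisors supported on the ramification divisor $P$. None of this can be skipped. (A secondary point: for the Schinzel curve the paper does not redo a Brauer--Manin computation on $\Pic^1$ of a genus-$3$ curve, but simply maps it to the Reichardt--Lind curve and invokes functoriality of the fundamental exact sequence; your appeal to ``the analogous known computation'' would need the obstruction against degree-$1$ divisor classes, not just against rational points, which is a genuinely stronger statement when $\Pic^1\neq X$.)
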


Dans le cas de la courbe de Schinzel, le théorème~\ref{th:intro} confirme la
prédiction donnée par la conjecture des sections de Grothendieck~\cite{grothfaltings}.
Dans le cas de la courbe de Reichardt--Lind, il répond à une question posée par Stix~\cite{stixbm}.

La courbe quartique de Schinzel est ainsi le second exemple connu d'une courbe projective et lisse~$X$ sur~$\Q$, de genre~$\geq 2$, telle que la suite~(\ref{eq:suitefond})
ne soit pas scindée bien que $X(\A_\Q)\neq\emptyset$.  Un premier exemple
avait en effet été construit par Harari, Szamuely et Flynn~\cite{haszaflynn}.
Les arguments de~\cite{haszaflynn} requièrent la connaissance d'informations fines sur l'arithmétique de la jacobienne de~$X$
(entre autres: finitude du groupe de Tate--Shafarevich, rang).
Il n'en est pas ainsi de la preuve du théorème~\ref{th:intro}, dont le seul ingrédient arithmétique global est la loi de réciprocité pour le groupe de Brauer de~$\Q$.

Au paragraphe~\ref{par:critere} de cette note, nous donnons un critère général pour que le groupe fondamental d'une courbe projective et lisse sur un corps de nombres
n'admette pas de section, sous l'hypothèse qu'une
obstruction de Brauer--Manin s'oppose à l'existence d'un diviseur de degré~$1$ sur cette courbe.  Nous démontrons ensuite le théorème~\ref{th:intro},
à l'aide de ce critère,
au paragraphe~\ref{par:demthintro}.

\bigskip
\noindent\textbf{Remerciements.} Le contenu de cette note fut en partie exposé à Heidelberg lors de la conférence PIA~2010 organisée par Jakob Stix,
que je remercie pour son hospitalité.
Je remercie également Tamás Szamuely pour ses commentaires utiles sur une première version du texte.

\bigskip
\noindent\textbf{Notations.} Si~$M$ est un groupe abélien, on note $M_{\mathrm{tors}}$ le sous-groupe de torsion de~$M$.
Tous les groupes de cohomologie apparaissant ci-dessous sont des groupes de cohomologie étale.
Si~$X$ est une courbe irréductible, projective et lisse sur un corps~$k$, on note $\Pic^1(X)$ le sous-ensemble de $\Pic(X)$ constitué des classes de degré~$1$
et $\Br(X)=H^2(X,\Gm)$ le groupe de Brauer de~$X$.
Lorsque~$k$ est un corps de nombres, on désigne par~$\Omega$ l'ensemble des places de~$k$ et, pour $v \in \Omega$, par~$k_v$ le complété de~$k$ en~$v$.
Dans cette situation, Manin a défini un accouplement
\begin{align}
\label{eq:accbm}
\Biggl( \prod_{v \in \Omega} \Pic(X\otimes_k k_v) \!\!\Biggr) \times \Br(X) \to \Q/\Z\rlap{\text{,}}
\end{align}
somme d'accouplements locaux
\begin{align}
\label{eq:acclocal}
\Pic(X \otimes_k k_v) \times \Br(X \otimes_k k_v) \to \Q/\Z\rlap{\text{;}}
\end{align}
le noyau à gauche de~(\ref{eq:accbm}) contient l'image diagonale de $\Pic(X)$ (cf.~\cite[(8-2)]{saito}).
On dit qu'un sous-groupe $B \subset \Br(X)$ (resp.~une classe $A \in \Br(X)$) est \emph{responsable d'une obstruction de Brauer--Manin à l'existence d'un diviseur de degré~$1$ sur~$X$}
si aucun élément de $\prod_{v \in \Omega} \Pic^1(X \otimes_k k_v)$ n'est orthogonal à~$B$ (resp.~à~$A$) pour~(\ref{eq:accbm}).

\section{Un critère pour l'absence de sections sur un corps de nombres}
\label{par:critere}

Dans ce paragraphe, nous supposons donnés
un corps de nombres~$k$,
une courbe~$X$ projective, lisse et géométriquement irréductible sur~$k$,
un entier $N \geq 1$ et un sous-groupe $\Gamma \subset H^2(X,\mmu_N)$.
Considérons les deux hypothèses suivantes:
\begin{itemize}
\medskip
\item[](BM)
L'ensemble $\Pic^1(X \otimes_k k_v)$ est non vide pour tout $v \in \Omega$ et
l'image de~$\Gamma$ par la flèche naturelle $p:H^2(X,\mmu_N) \to \Br(X)$ est responsable d'une obstruction de Brauer--Manin à l'existence d'un diviseur de degré~$1$ sur~$X$.
\end{itemize}

\begin{itemize}
\medskip
\item[](T)
Pour toute place~$v$ de~$k$ divisant~$N$ et tout $\gamma \in \Gamma$, il existe
 $\gamma_{v,0} \in H^2(k_v,\mmu_N)$ et $\gamma_{v,1} \in \Pic(X \otimes_k k_v)_{\mathrm{tors}}$
tels que l'image $\gamma_v$ de~$\gamma$ dans $H^2(X \otimes_k k_v,\mmu_N)$ s'écrive $$\gamma_v=\gamma_{v,0} + c(\gamma_{v,1})\text{,}$$
où~$c$ désigne l'application classe de cycle $c:\Pic(X \otimes_k k_v) \to H^2(X \otimes_k k_v,\mmu_N)$.
\end{itemize}

\medskip
L'hypothèse~(BM) entraîne que $\Pic^1(X)=\emptyset$ et, par conséquent, que $X(k)=\emptyset$.
D'après Grothendieck~\cite{grothfaltings}, il devrait s'ensuivre, si le genre de~$X$ est $\geq 2$,
que la suite exacte fondamentale
\begin{align}
\label{eq:suitefondk}
\xymatrix{
1 \ar[r] & \pi_1(X \otimes_k \kbar) \ar[r] & \pi_1(X) \ar[r] & \Gal(\kbar/k) \ar[r] & 1
}
\end{align}
(cf.~\cite[Exp.~IX, 6.1]{sga1})
n'est pas scindée.
Nous montrons dans le théorème~\ref{th:critere} que tel est bien le cas si en outre l'hypothèse~(T) est satisfaite (et ce, même si~$X$ est de genre~$1$).

\begin{theorem}
\label{th:critere}
Soit~$X$ une courbe projective, lisse et géométriquement irréductible, sur un corps de nombres~$k$.
Soient $N \geq 1$ un entier et $\Gamma \subset H^2(X,\mmu_N)$ un sous-groupe.
Si~(BM) et~(T) sont vérifiées, la suite~(\ref{eq:suitefondk}) n'est pas scindée.
\end{theorem}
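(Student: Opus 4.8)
Suppose, for contradiction, that the fundamental exact sequence~(\ref{eq:suitefondk}) is split, and fix a section $s\colon \Gal(\kbar/k) \to \pi_1(X)$. The plan is to extract from~$s$ an adelic family of local points-with-multiplicity that is orthogonal to $p(\Gamma)$ for the Brauer--Manin pairing~(\ref{eq:accbm}), contradicting hypothesis~(BM). The mechanism is functoriality: for each place $v \in \Omega$, composing $s$ with the map $\Gal(\kbar_v/k_v) \to \Gal(\kbar/k)$ induced by a choice of embedding yields a section $s_v$ of the local fundamental sequence for $X \otimes_k k_v$. Over the local fields one invokes the results of~\cite{ew} on the algebraicity of the cycle class of a section: the class $s_v$ produces, via the $\pi_1$-version of the cycle class map and abelianization, a canonical class in the relevant degree-$2$ cohomology group of $X \otimes_k k_v$ that is \emph{algebraic}, i.e. comes from an element $z_v \in \Pic(X \otimes_k k_v)$ — and this element has degree~$1$, because the section is compatible with the degree map $\pi_1(X) \to \pi_1(\Spec k) = \Gal(\kbar/k)$ which recovers the structure map on $\pi_1^{\mathrm{ab}}$. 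Thus $(z_v)_v \in \prod_{v} \Pic^1(X \otimes_k k_v)$, which is nonempty by~(BM).

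**Orthogonality.** It remains to show $(z_v)_v$ is orthogonal to $p(\Gamma)$, i.e. that for every $\gamma \in \Gamma$ one has $\sum_v \inv_v\langle z_v, p(\gamma)\rangle = 0$ in $\Q/\Z$. The key point is that the local pairing of $z_v$ against $p(\gamma)$ can be computed cohomologically: it equals (up to the identification $H^2(k_v,\mmu_N) \simeq \frac1N\Z/\Z$) the image in $H^2(k_v,\mmu_N)$ of $\gamma$ under pullback along $s_v$, viewed as a map $H^2(X \otimes_k k_v, \mmu_N) \to H^2(k_v, \mmu_N)$ on $\pi_1$-cohomology — this is precisely the compatibility between the section-theoretic cycle class and the cup-product/Manin pairing established in~\cite{ew}. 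Now at places $v \nmid N$ this local term vanishes for cohomological dimension reasons (the $\mmu_N$-cohomology in degree $2$ of the local Galois group, evaluated against unramified data, is controlled). At the finitely many places $v \mid N$ we use hypothesis~(T): write $\gamma_v = \gamma_{v,0} + c(\gamma_{v,1})$; the contribution of $\gamma_{v,0}$, being pulled back from $H^2(k_v,\mmu_N)$, is independent of the section and contributes $\inv_v(\gamma_{v,0})$, while the contribution of $c(\gamma_{v,1})$ with $\gamma_{v,1}$ torsion is again section-independent (the section pairs with a torsion algebraic class in a way that only sees its image in $H^2(k_v,\mmu_N)$, which matches the geometric pairing $\langle z_v, c(\gamma_{v,1})\rangle$ up to the constant $\inv_v$ term). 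Either way, the \emph{$s$-dependence cancels}, and $\sum_v \inv_v\langle z_v, p(\gamma)\rangle$ equals the same expression computed for \emph{any} fixed choice of local divisors of degree~$1$; but then the global reciprocity law for $\Br(k)$ forces this sum to be $0$. [More precisely: the difference between the pairing of $(z_v)_v$ and of any other adelic family of degree-$1$ classes lies in the image of $\Pic(X) \otimes \cdots$, hence in the left kernel of~(\ref{eq:accbm}) by the remark following~(\ref{eq:acclocal}); so orthogonality of $(z_v)_v$ to $p(\gamma)$ reduces to orthogonality of a global divisor class, which holds tautologically since the global pairing vanishes on $\Pic(X)$.]

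**Main obstacle.** The technical heart — and the step I expect to be most delicate — is the identification, at the places dividing $N$, of the section-theoretic pairing $\langle s_v^*\gamma_v, -\rangle$ with the geometric Manin pairing $\langle z_v, p(\gamma)\rangle$, i.e. the precise compatibility of the cycle-class-of-a-section construction of~\cite{ew} with cup products in $\mmu_N$-cohomology and with the passage $H^2(X,\mmu_N) \to \Br(X)$ via $p$. This is where hypothesis~(T) is exactly tailored to apply: it guarantees that $\gamma_v$ differs from an \emph{honest} cycle class $c(\gamma_{v,1})$ only by a class $\gamma_{v,0}$ pulled back from the base, so that the algebraicity result of~\cite{ew} (which a priori compares $s_v^*$ with cycle classes of actual divisors) can be brought to bear without needing to know that $\gamma$ itself is algebraic on the nose. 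Assembling the local computations into the global reciprocity contradiction is then routine, modulo bookkeeping of the normalizations $\inv_v$ and the identification $\mmu_N \subset \Gm$ compatibly with~(\ref{eq:accbm}).
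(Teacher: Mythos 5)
Your overall skeleton --- localize the section, compare its cycle class with cycle classes of local degree-one divisors, and conclude by global reciprocity --- is the right one, but three steps as written are false or circular. First, the algebraicity results of \cite{ew} produce a class $z_v\in\Pic(X\otimes_k k_v)$ with $c(z_v)=\alpha_v$ (where $\alpha_v$ is the cycle class of the local section $s_v$) only at the places $v$ \emph{not} dividing $N$ (finite or real); at $v\mid N$ no such $z_v$ is available, and this is precisely why hypothesis~(T) exists. The correct move there is to pick an \emph{arbitrary} $D_v\in\Pic^1(X\otimes_k k_v)$ (nonempty by~(BM)) and to prove that it pairs with $\gamma_v$ exactly as $\alpha_v$ does. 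Second, your claim that at $v\nmid N$ ``this local term vanishes for cohomological dimension reasons'' is wrong: those terms are typically where the Brauer--Manin obstruction lives (for Reichardt--Lind one has $N=2$ and the obstruction is concentrated at $v=17$); what is true at $v\nmid N$ is only that $\alpha_v=c(D_v)$ on the nose, so the section-theoretic and divisor-theoretic local pairings agree there. Third, your closing reduction --- that the pairing of $(z_v)_v$ against $p(\gamma)$ agrees with that of any adelic degree-one family because the difference ``lies in the image of $\Pic(X)$'' and is therefore tautologically orthogonal --- is circular: the difference of two adelic degree-one families is an adelic degree-zero family with no reason to be global, and if this reduction were valid no class could ever be responsible for a Brauer--Manin obstruction, so~(BM) would be vacuous. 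The actual global input is that $\langle\alpha,\gamma\rangle$ is an element of $\Br(k)$ whose local invariants sum to zero by reciprocity; combined with the local identities $\inv_v\langle\alpha_v,\gamma_v\rangle=\inv_v\langle c(D_v),\gamma_v\rangle$ and the compatibility of the cohomological pairing with~(\ref{eq:acclocal}), this exhibits $(D_v)_{v\in\Omega}$ as orthogonal to $p(\gamma)$ for~(\ref{eq:accbm}), contradicting~(BM).

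At the places $v\mid N$, the two ``section-independence'' assertions you make are exactly what has to be proved, and the gloss you give does not supply proofs. Writing $\gamma_v=\gamma_{v,0}+c(\gamma_{v,1})$ as in~(T), one needs: (a)~$\langle\alpha_v-c(D_v),\gamma_{v,0}\rangle=0$, which holds because $\alpha_v-c(D_v)$ dies in $H^2(X\otimes_k\kbar_v,\mmu_N)$ while $\gamma_{v,0}$ comes from the base, so their cup product lies in $F^3H^4$ for the Hochschild--Serre filtration (respected by cup products) and is killed by the edge map defining the pairing; (b)~$\langle\alpha_v,c(\gamma_{v,1})\rangle=0$, which is \emph{not} a formal consequence of $\gamma_{v,1}$ being torsion --- the reason is that $\alpha_v$ lifts compatibly to $\varprojlim_m H^2(X\otimes_k k_v,\mmu_m)$, so $p(\alpha_v)\in\Br(X\otimes_k k_v)$ is infinitely divisible and hence annihilates the torsion class $\gamma_{v,1}$ under~(\ref{eq:acclocal}); and (c)~$\langle c(D_v),c(\gamma_{v,1})\rangle=0$ since $p\circ c=0$. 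Finally, the compatibility you flag as the ``main obstacle'' --- identifying $\langle c(x),y\rangle$ with the Manin local pairing of $x$ against $p(y)$ --- does require a genuine verification (that the edge map composed with the Gysin pushforward from a closed point is the corestriction from its residue field), but it concerns only divisors and Brauer classes; the section enters solely through~(a) and~(b).
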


Ainsi, étant données une courbe~$X$ et une classe $A \in \Br(X)$ responsable d'une obstruction de Brauer--Manin à l'existence d'un diviseur
de degré~$1$ sur~$X$, pour que la suite~(\ref{eq:suitefondk}) ne soit pas scindée, il suffit qu'il existe un entier $N \geq 1$ et un relèvement
$\gamma \in H^2(X,\mmu_N)$ de~$A$ tels que pour toute place~$v$ de~$k$ divisant~$N$, l'image de~$\gamma$ dans $H^2(X \otimes_k k_v,\mmu_N)$ appartienne au sous-groupe
engendré par $H^2(k_v,\mmu_N)$ et par $c(\Pic(X \otimes_k k_v)_{\mathrm{tors}})$.

\spnewtheorem*{proofthcritere}{Preuve du théorème~\noexpand\ref{th:critere}}{\itshape}{\rmfamily}
\begin{proofthcritere}
Par l'absurde, supposons la suite~(\ref{eq:suitefondk}) scindée et fixons-en une section $s:\Gal(\kbar/k)\to \pi_1(X)$.
L'hypothèse~(BM) entraîne que la courbe~$X$ est de genre~$\geq 1$; c'est donc un $K(\pi,1)$ et l'on peut associer à~$s$ une classe
de cohomologie étale $\alpha \in H^2(X,\mmu_N)$ (cf.~\cite[2.6]{ew}).
Rappelons que~$\alpha$,
dite \emph{classe de cycle de~$s$},
est caractérisée par la propriété suivante: il existe un revêtement étale $f : Y \to X$ tel que~$s$ se
factorise par~$\pi_1(Y)$ et que, notant $\pr : X \times_k Y \to X$ la première projection, la classe dans
$H^2(X \times_k Y,\mmu_N)$
du graphe de~$f$ soit égale à
$\pr^\star\alpha$.

Pour $v \in \Omega$, notons $c:\Pic(X \otimes_k k_v) \to H^2(X \otimes_k k_v,\mmu_N)$ l'application classe de cycle
et~$\alpha_v$ l'image de~$\alpha$ dans $H^2(X \otimes_k k_v,\mmu_N)$.

\begin{proposition}
\label{propalg}
Pour tout $v \in \Omega$ ne divisant pas~$N$,
il existe $D_v \in \Pic^1(X \otimes_k k_v)$ tel que $\alpha_v=c(D_v)$.
\end{proposition}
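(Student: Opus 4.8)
\medskip
\noindent\textbf{Plan of proof.} The plan is to restrict the section~$s$ to a decomposition group at~$v$, to obtain a divisor class from the local algebraicity theorem of~\cite{ew}, and then to correct its degree using hypothesis~(BM).

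First I would fix a place of~$\kbar$ above~$v$; this produces an embedding $\Gal(\overline{k_v}/k_v)\hookrightarrow\Gal(\kbar/k)$ and, by restricting~$s$, a section $s_v:\Gal(\overline{k_v}/k_v)\to\pi_1(X\otimes_k k_v)$ of the fundamental exact sequence of $X\otimes_k k_v$ (one uses here that $\pi_1(X\otimes_k k_v)$ is canonically the fibre product of $\pi_1(X)$ with $\Gal(\overline{k_v}/k_v)$ over $\Gal(\kbar/k)$, the geometric fundamental group $\pi_1(X\otimes_k\kbar)=\pi_1(X\otimes_k\overline{k_v})$ being unaffected by the passage from~$\kbar$ to~$\overline{k_v}$). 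The curve $X\otimes_k k_v$ is again projective, smooth and geometrically irreducible of genus~$\geq 1$, hence a $K(\pi,1)$, so~$s_v$ has a cycle class in $H^2(X\otimes_k k_v,\mmu_N)$, and the first thing to check is that this class is~$\alpha_v$. I would deduce this from the characterisation recalled above: if $f:Y\to X$ is an \'etale cover through which~$s$ factors and such that the class of the graph of~$f$ equals~$\pr^\star\alpha$, then~$s_v$ factors through $\pi_1(Y\otimes_k k_v)$ and, the formation of the graph of~$f$ and of the first projection being compatible with $-\otimes_k k_v$, the class of the graph of $f\otimes_k k_v$ is the pullback of~$\alpha_v$ along $(X\otimes_k k_v)\times_{k_v}(Y\otimes_k k_v)\to X\otimes_k k_v$; hence~$\alpha_v$ is the cycle class of~$s_v$.

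Next I would invoke the main result of~\cite{ew} on the algebraicity of cycle classes of sections over local fields. Since~$v$ does not divide~$N$, its residue characteristic (if~$v$ is finite) is prime to~$N$, so the hypotheses of that theorem are met --- the archimedean places being treated in the same way or by a direct argument --- and $\alpha_v$ lies in the image of the cycle class map $c:\Pic(X\otimes_k k_v)\to H^2(X\otimes_k k_v,\mmu_N)$: say $\alpha_v=c(D_v')$ with $D_v'\in\Pic(X\otimes_k k_v)$. It remains to arrange that $D_v'$ can be taken of degree~$1$. Restricting the equality between the class of the graph of~$f$ and $\pr^\star\alpha$ along the section $x\mapsto(x,y)$ over a $\kbar$-point~$y$ of $Y\otimes_k\kbar$ shows that $\alpha|_{X\otimes_k\kbar}$ is the class of a single reduced $\kbar$-point, hence has degree~$1$ under the trace isomorphism $H^2(X\otimes_k\kbar,\mmu_N)\xrightarrow{\sim}\Z/N\Z$; therefore $\deg D_v'\equiv 1\pmod N$. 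By~(BM) the set $\Pic^1(X\otimes_k k_v)$ is non-empty; fixing $P_v$ in it and setting $D_v=D_v'-(\deg D_v'-1)\,P_v$, one gets $\deg D_v=1$, while $c(D_v)=c(D_v')=\alpha_v$ because $\deg D_v'-1$ is divisible by~$N$ and $H^2(X\otimes_k k_v,\mmu_N)$ is killed by~$N$. This proves the proposition.

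I expect the real difficulty to lie entirely in the cited input, namely the local algebraicity theorem of~\cite{ew}; granting it, the only genuine verification is the compatibility of the cycle class of a section with the base change $k\to k_v$, the degree bookkeeping being routine.
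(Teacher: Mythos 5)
Your proof is correct and follows essentially the same route as the paper: restrict the section to a decomposition group at~$v$ (using that the geometric fundamental group is unchanged under $\kbar\subset\kbar_v$), identify the cycle class of~$s_v$ with~$\alpha_v$ via the graph characterisation, invoke the algebraicity results of~\cite{ew} for $v\nmid N$, and correct the degree using a class from $\Pic^1(X\otimes_k k_v)$, which is non-empty by~(BM). The only difference is cosmetic: where you rederive $\deg\alpha_v\equiv 1\pmod N$ from the graph class, the paper simply cites \cite[2.6]{ew}.
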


\begin{proof}
Soit $v \in \Omega$ ne divisant pas~$N$.  Notons~$\kbar_v$ une clôture algébrique de~$k_v$ contenant~$\kbar$.  La flèche verticale de gauche du diagramme commutatif
\begin{align*}
\xymatrix@R=2em{
1 \ar[r] & \pi_1(X \otimes_k \kbar_v) \ar[d] \ar[r] & \pi_1(X \otimes_k k_v) \ar[d] \ar[r] & \Gal(\kbar_v/k_v) \ar[d] \ar[r] & 1 \\
1 \ar[r] & \pi_1(X \otimes_k \kbar) \ar[r] & \pi_1(X) \ar[r] & \Gal(\kbar/k) \ar[r] & 1
}
\end{align*}
étant un isomorphisme (cf.~\cite[Exp.~X, 1.8]{sga1}), la section~$s$ induit une section $s_v : \Gal(\kbar_v/k_v) \to \pi_1(X \otimes_k k_v)$ de la première ligne de ce diagramme.
À l'aide de la caractérisation de la classe de cycle d'une section rappelée ci-dessus,
on voit que la classe de cycle de~$s_v$ est égale à~$\alpha_v$.
Comme~$v$ ne divise pas~$N$, il résulte maintenant de \cite[Cor.~3.4, Rem.~3.7~(ii)]{ew}, si~$v$ est finie, ou de \cite[Rem.~3.7~(iv)]{ew}, si~$v$ est réelle, qu'il existe
$D_{v,0} \in \Pic(X \otimes_k k_v)$ tel que $\alpha_v = c(D_{v,0})$.
L'image de~$\alpha_v$ dans $H^2(X \otimes_k \kbar_v,\mmu_N)=\Z/N\Z$ est égale à~$1$
(cf.~\cite[2.6]{ew}).  Par conséquent $\deg(D_{v,0})=1+Nm$ pour un $m \in \Z$.  Soit $D_{v,1}$ un élément de l'ensemble $\Pic^1(X \otimes_k k_v)$, non vide par hypothèse.
Posons $D_v = D_{v,0} - Nm D_{v,1}$.
On a alors bien $\deg(D_v)=1$ et $c(D_v)=\alpha_v$.
\qed\end{proof}

Fixons, pour chaque $v \in \Omega$ divisant~$N$, un élément
arbitraire $D_v \in \Pic^1(X \otimes_k k_v)$,
et pour chaque $v \in \Omega$ ne divisant pas~$N$, un élément $D_v \in \Pic^1(X \otimes_k k_v)$ tel que $\alpha_v=c(D_v)$.
Nous allons maintenant démontrer que la famille $(D_v)_{v \in \Omega}$ est orthogonale,
pour l'accouplement~(\ref{eq:accbm}), à l'image de~$\Gamma$ par $p:H^2(X,\mmu_N) \to \Br(X)$.

Pour toute extension $K/k$, notons $$\mylangle -,- \myrangle : H^2(X \otimes_kK,\mmu_N) \times H^2(X \otimes_kK,\mmu_N) \to \Br(K)$$ la composée
du cup-produit
$H^2(X \otimes_kK,\mmu_N) \times H^2(X \otimes_kK,\mmu_N) \to H^4(X \otimes_k K, \mmu_N^{\otimes 2})$
et de la flèche $\delta:H^4(X \otimes_k K, \mmu_N^{\otimes 2}) \to H^2(K, H^2(X \otimes_k \Kbar,\mmu_N^{\otimes 2}))
=H^2(K,\mmu_N) \subset \Br(K)$ issue de la suite spectrale de Hochschild--Serre.
D'autre part, pour $v \in \Omega$, notons $\inv_v : \Br(k_v) \hookrightarrow \Q/\Z$ l'invariant de la théorie du corps de classes local
et notons encore $p : H^2(X \otimes_k k_v,\mmu_N) \to \Br(X \otimes_k k_v)$ la flèche naturelle.

\begin{lemma}
\label{lem:compatible}
Soit $v \in \Omega$.
Les accouplements $\mylangle -,- \myrangle$ et~(\ref{eq:acclocal})
s'inscrivent dans
un diagramme commutatif
$$
\xymatrix@R=1.5em@C=8ex{
\ar@<6ex>[d]^p H^2(X \otimes_k k_v, \mmu_N) \times H^2(X \otimes_k k_v,\mmu_N) \ar[r] & \Br(k_v) \ar[d]^{\inv_v} \\
\mkern-5mu \ar@<6ex>[u]^c \Pic(X \otimes_k k_v) \times \Br(X\otimes_k k_v) \ar[r] & \Q/\Z\rlap{\text{.}}
}
$$
\end{lemma}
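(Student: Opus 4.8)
The claim is that two pairings fit into a commutative square: the "global-style" pairing $\mylangle -,-\myrangle$ obtained by cup-product followed by the Hochschild–Serre edge map $\delta$, and Manin's local pairing \eqref{eq:acclocal} on $\Pic(X\otimes_k k_v)\times\Br(X\otimes_k k_v)$, the vertical maps being the cycle class $c$ on the left and $p:H^2(X\otimes_k k_v,\mmu_N)\to\Br(X\otimes_k k_v)$ together with $\inv_v$ on the right. Concretely, for $D\in\Pic(X\otimes_k k_v)$ and $\gamma\in H^2(X\otimes_k k_v,\mmu_N)$ one must check
$$
\inv_v\,\mylangle c(D),\gamma\myrangle \;=\; \langle D,\,p(\gamma)\rangle_v\rlap{,}
$$
where the right-hand side is the local Brauer–Manin pairing. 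Write $K=k_v$ throughout.

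The first step is to recall, or set up, the cohomological description of Manin's local pairing \eqref{eq:acclocal}. The standard construction (as in Saito, reference \cite{saito}) identifies it, up to the identifications coming from class field theory, with a cup-product pairing: a degree-$1$ zero-cycle class paired against a Brauer class, landing in $H^2$ of the arithmetic base after pushing forward along the structure morphism $X\otimes_k K\to\Spec K$ (i.e.\ the trace/Gysin map in étale cohomology, equivalently the edge map in Hochschild–Serre for the base field). So the plan is to unwind both sides into statements about cup-products in $H^\bullet(X\otimes_k K,-)$ followed by the same pushforward/edge map to $H^2(K,\mmu_N)\subset\Br(K)$, and then to match the inputs. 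On the left we have $c(D)\cup\gamma\in H^4(X\otimes_k K,\mmu_N^{\otimes 2})$; the map $\delta$ is precisely the Hochschild–Serre edge $H^4(X\otimes_k K,\mmu_N^{\otimes 2})\to H^2(K,H^2(X\otimes_k\Kbar,\mmu_N^{\otimes 2}))$, and $H^2(X\otimes_k\Kbar,\mmu_N^{\otimes 2})\cong\mmu_N$ canonically (Poincaré duality / trace on the curve). On the right, $D$ pairs with $p(\gamma)$ by restricting $p(\gamma)$ to the support of $D$ (a finite set of closed points), taking the local invariant at each, and summing with multiplicities.

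The heart of the argument is the \emph{projection formula}: for a closed point $x$ of $X\otimes_k K$ with structure map $i_x:\Spec\kappa(x)\to X\otimes_k K$, one has $c(x)\cup\gamma = (i_x)_*\, i_x^*\gamma$ in $H^4$, and the pushforward to $\Spec K$ of $(i_x)_* i_x^*\gamma$ equals the corestriction $\Cores_{\kappa(x)/K}$ of $i_x^*\gamma\in H^2(\kappa(x),\mmu_N)$. Summing over the points of $D$ with their multiplicities reproduces exactly the sum-of-local-invariants recipe defining the right-hand side, once one checks that $\Cores$ is compatible with $\inv$ (this is the standard fact $\inv_K\circ\Cores_{L/K}=\inv_L$ from local class field theory, applied to the residue fields, which are finite extensions of $K=k_v$). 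One also needs that the Hochschild–Serre edge map $\delta$ on $H^4$ agrees, after the identification $H^2(X\otimes_k\Kbar,\mmu_N^{\otimes 2})=\mmu_N$, with the Gysin pushforward along $X\otimes_k K\to\Spec K$; this is a compatibility between the trace on a smooth proper curve and the edge map of the Leray/Hochschild–Serre spectral sequence, which is classical but should be cited or stated carefully.

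I expect the main obstacle to be \emph{bookkeeping of signs and identifications} rather than any deep input: one must pin down the isomorphism $H^2(X\otimes_k\Kbar,\mmu_N^{\otimes 2})\cong\mmu_N$ consistently with the cycle class normalization used in \cite{ew} (so that the class of a rational point maps to $1$, cf.\ the use of this in Proposition~\ref{propalg}), and verify that the two routes to $H^2(K,\mmu_N)$ — via $\delta$ on the one hand, via $\Cores$ from residue fields on the other — carry the same sign. A clean way to organize this is to reduce, by linearity and by the moving lemma on the curve, to the case $D=[x]$ for a single closed point $x$, choose $\gamma$ arbitrary, and then the whole identity becomes the projection formula plus functoriality of Hochschild–Serre under the finite morphism $i_x$, composed with the class field theory compatibility of $\inv$ with $\Cores$. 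If a reference such as \cite[(8-2)]{saito} already phrases Manin's pairing in exactly these cup-product terms, the lemma is almost a matter of citing it; otherwise one spells out the above in a few lines.
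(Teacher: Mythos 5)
Your plan follows essentially the same route as the paper: reduce by linearity to a single closed point $x$, apply the projection formula to write $c(x)\smile\gamma=i_\star i^\star\gamma$, and then match $\delta\circ i_\star$ with the corestriction $H^2(k_v(x),\mmu_N)\to H^2(k_v,\mmu_N)$ so that Saito's definition of the local pairing takes over. The only substantive difference is that the identification of $\delta\circ i_\star$ with corestriction --- which you flag as ``classical but should be cited or stated carefully'' --- is exactly the step the paper does not cite but proves, via a short derived-category argument showing that the induced morphism of Galois modules $(\rho\circ i)_\star\mmu_N\to\mmu_N$ is the norm map; if you cannot locate a reference in the precise normalization needed, you would have to supply an argument of this kind.
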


\begin{proof}
Soient $x \in X \otimes_k k_v$ un point fermé et $i : \Spec(k_v(x)) \hookrightarrow X \otimes_k k_v$ l'injection canonique.
Pour tout $y \in H^2(X \otimes_k k_v,\mmu_N)$,
on a $\mylangle c(x), y \myrangle = \delta ( c(x) \smile y ) = \delta ( i_\star i^\star y)$, où la seconde égalité résulte de la formule de projection.  Compte tenu de la définition
de~(\ref{eq:acclocal}) (cf.~\cite[p.~399]{saito}), il suffit donc, pour conclure, de vérifier que l'application $\delta \circ i_\star : H^2(k_v(x),\mmu_N) \to H^2(k_v,\mmu_N)$
s'identifie au morphisme de corestriction de~$k_v(x)$ à~$k_v$.

Notons
 $\rho : X \otimes_k k_v \to \Spec(k_v)$ le morphisme structural
et
 $a:i_\star \mmu_N \to \mmu_N^{\otimes 2}[2]$ la flèche,
dans la catégorie
dérivée des faisceaux étales en groupes abéliens sur~$X$,
donnant naissance au morphisme de Gysin $i_\star : H^2(k_v(x),\mmu_N) \to H^4(X \otimes_k k_v,\mmu_N^{\otimes 2})$.
La composée de $R\rho_\star a$
et de la troncation $R\rho_\star \mmu_N^{\otimes 2}[2] \to \left(\tau_{\geq 2}R\rho_\star \mmu_N^{\otimes 2}\right)[2] = \mmu_N$
est une flèche entre complexes concentrés en degré~$0$. Elle provient donc d'un unique morphisme de modules galoisiens $b : (\rho \circ i)_\star \mmu_N \to \mmu_N$.
La flèche obtenue en appliquant à~$b$ le foncteur $H^0(\kbar_v,-)$ est la composée
 $$\mmu_N(x \otimes_{k_v} \kbar_v) \to H^2(X \otimes_k \kbar_v,\mmu_N^{\otimes 2}) \to \mmu_N$$
des applications classe de cycle et degré tordues par~$\mmu_N$.
Par conséquent~$b$ est l'application norme.
D'autre part, en appliquant à~$b$ le foncteur $H^2(k_v,-)$, on retrouve le morphisme $\delta \circ i_\star$; ainsi le lemme est prouvé.
\qed\end{proof}

Soit $\gamma \in \Gamma$.  Pour $v \in \Omega$, notons $\gamma_v$ l'image de~$\gamma$ dans $H^2(X \otimes_k k_v,\mmu_N)$.

\begin{proposition}
\label{prop:laprop}
Pour tout $v \in \Omega$, on a \kern.15ex$\mylangle \alpha_v, \gamma_v \myrangle = \mylangle c(D_v), \gamma_v \myrangle$.
\end{proposition}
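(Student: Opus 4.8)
The plan is to distinguish the places $v$ dividing~$N$ from the rest. For $v\nmid N$ nothing is to be done, since $D_v$ was chosen so that $\alpha_v=c(D_v)$. So I would fix $v\mid N$ — necessarily finite, hence $\mathrm{cd}(k_v)\leq2$ — and $\gamma\in\Gamma$, and begin by exploiting~(T): write $\gamma_v=\gamma_{v,0}+c(\gamma_{v,1})$ with $\gamma_{v,0}\in H^2(k_v,\mmu_N)$, which I identify with its image in $H^2(X\otimes_k k_v,\mmu_N)$ under pull-back along the structural morphism $\rho\colon X\otimes_k k_v\to\Spec k_v$, and $\gamma_{v,1}\in\Pic(X\otimes_k k_v)_{\mathrm{tors}}$. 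Two features of this decomposition are crucial. First, $\gamma_{v,1}$ being torsion has degree~$0$, so $\gamma_v$ restricts to~$0$ in $H^2(X\otimes_k\kbar_v,\mmu_N)=\Z/N\Z$ (the restriction of $\gamma_{v,0}$ factors through $H^2(\kbar_v,\mmu_N)=0$, and that of $c(\gamma_{v,1})$ is $\deg(\gamma_{v,1})$); hence $\gamma_v$ belongs to the first step $F^1$ of the Hochschild--Serre filtration on $H^2(X\otimes_k k_v,\mmu_N)$. Second, applying the natural map $p\colon H^2(X\otimes_k k_v,\mmu_N)\to\Br(X\otimes_k k_v)$ and using $p\circ c=0$, the class $p(\gamma_v)$ is \emph{constant}: it is the pull-back along $\rho$ of the image of $\gamma_{v,0}$ in $\Br(k_v)[N]$.

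The one non-formal ingredient — and the step I expect to be the genuine obstacle — is the local algebraicity result of~\cite{ew}. Applied to the section $s_v$ of $X\otimes_k k_v$ associated with~$s$ (constructed as in the proof of Proposition~\ref{propalg}, whose construction of $s_v$ and whose identification of its cycle class with $\alpha_v$ are insensitive to the condition $v\nmid N$), it yields $D'\in\Pic(X\otimes_k k_v)$ and $\eta_v\in H^2(k_v,\mmu_N)$ with $\alpha_v=c(D')+\rho^\star\eta_v$. At places $v\nmid N$ one may take $\eta_v=0$ (this is Proposition~\ref{propalg}); it is precisely the possible non-triviality of the correction term $\rho^\star\eta_v$ at places $v\mid N$ that makes hypothesis~(T) necessary. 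Restricting the identity $\alpha_v=c(D')+\rho^\star\eta_v$ to $X\otimes_k\kbar_v$ and using that $\alpha_v$ maps to~$1$ there, I get $\deg(D')\equiv1\pmod N$, hence $\deg(D'-D_v)\equiv0\pmod N$.

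It then remains a short computation. By bilinearity of $\mylangle-,-\myrangle$ and additivity of~$c$,
$$\mylangle\alpha_v,\gamma_v\myrangle-\mylangle c(D_v),\gamma_v\myrangle=\mylangle c(D'-D_v),\gamma_v\myrangle+\mylangle\rho^\star\eta_v,\gamma_v\myrangle\rlap{\text{.}}$$
The term $\mylangle\rho^\star\eta_v,\gamma_v\myrangle$ vanishes by a weight argument: $\rho^\star\eta_v$ lies in $F^2$ (being pulled back from $\Spec k_v$) and $\gamma_v$ in $F^1$, so by multiplicativity of the Hochschild--Serre filtration $\rho^\star\eta_v\smile\gamma_v$ lies in $F^3H^4(X\otimes_k k_v,\mmu_N^{\otimes 2})$, which is~$0$ because $\mathrm{cd}(k_v)\leq2$ (the graded piece in degree~$i$ of that group is a subquotient of $H^i(k_v,H^{4-i}(X\otimes_k\kbar_v,\mmu_N^{\otimes 2}))$, zero for $i\geq3$); hence $\delta$ annihilates it. For the remaining term, Lemma~\ref{lem:compatible} identifies $\inv_v\mylangle c(D'-D_v),\gamma_v\myrangle$ with the value of the local pairing~(\ref{eq:acclocal}) on $\bigl(D'-D_v,\,p(\gamma_v)\bigr)$; since $p(\gamma_v)$ is constant, the projection formula already used in the proof of Lemma~\ref{lem:compatible} evaluates this to $\deg(D'-D_v)$ times the invariant of a class annihilated by~$N$, hence to~$0$. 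Thus $\inv_v\mylangle\alpha_v,\gamma_v\myrangle=\inv_v\mylangle c(D_v),\gamma_v\myrangle$, and the injectivity of $\inv_v$ gives the assertion. If~\cite{ew} turns out to be phrased rather in terms of the image of $\alpha_v-c(D_v)$ in $F^1/F^2$, I would instead note, via multiplicativity of Hochschild--Serre, that $\delta\bigl((\alpha_v-c(D_v))\smile\gamma_v\bigr)$ is the cup product — for the Weil pairing on $H^1(X\otimes_k\kbar_v,\mmu_N)$ — of the images of $\alpha_v-c(D_v)$ and $\gamma_v$ in $H^1(k_v,H^1(X\otimes_k\kbar_v,\mmu_N))$; the latter is the Kummer class of $\gamma_{v,1}$, the former lies in the image of the Kummer map of the Jacobian, and this image is its own orthogonal complement for that pairing (local Tate duality and the principal polarization), so the product is~$0$.
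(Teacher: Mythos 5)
Your handling of the places $v\nmid N$, and your two formal observations at $v\mid N$ --- that hypothesis~(T) places $\gamma_v$ in $F^1$ of the Hochschild--Serre filtration with $p(\gamma_v)$ constant, and that the cup product of a class in $F^1$ with a class in $F^2$ lands in $F^3H^4=0$ --- are correct; the latter is exactly the paper's Lemma~\ref{lem:annule}. The gap is the step you yourself flag as the likely obstacle: you assume that the local algebraicity theorem of~\cite{ew} applies at the places $v$ dividing~$N$, in the form $\alpha_v=c(D')+\rho^\star\eta_v$. No such statement is available there: \cite[Cor.~3.4, Rem.~3.7]{ew} give algebraicity of the cycle class of a local section only when the residue characteristic does not divide~$N$ (plus the real case), and the entire reason hypothesis~(T) exists is to circumvent the absence of any algebraicity statement, even up to constant classes, at $v\mid N$. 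Without that input your decomposition $\alpha_v-c(D_v)=c(D'-D_v)+\rho^\star\eta_v$ is unjustified and the evaluation of $\mylangle c(D'-D_v),\gamma_v\myrangle$ collapses. Your fallback suffers from the same defect: the claim that the image of $\alpha_v-c(D_v)$ in $H^1(k_v,H^1(X\otimes_k\kbar_v,\mmu_N))$ lies in the image of the Kummer map of the Jacobian is again an algebraicity-mod-constants assertion, and it fails in general at $v\mid N$ because $H^1(k_v,\mathrm{Jac})$ has a nontrivial divisible part at the residue characteristic (only for $\ell$ prime to the residue characteristic is $\varprojlim_m H^1(k_v,\mathrm{Jac}[\ell^m])$ exhausted by the Kummer image).

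The paper's proof never decomposes $\alpha_v$; it decomposes $\gamma_v$ instead, using the other half of~(T): $\gamma_v=\gamma_{v,0}+c(\gamma_{v,1})$ with $\gamma_{v,1}$ a \emph{torsion} divisor class. The term $\mylangle\alpha_v-c(D_v),\gamma_{v,0}\myrangle$ vanishes by the filtration argument (Lemma~\ref{lem:annule}, with $\alpha_v-c(D_v)\in F^1$ since both classes have degree~$1$, and $\gamma_{v,0}\in F^2$). The term $\mylangle c(D_v),c(\gamma_{v,1})\myrangle$ vanishes because $p\circ c=0$. The crucial remaining term $\mylangle\alpha_v,c(\gamma_{v,1})\myrangle$ is killed by the ingredient you are missing: the cycle class of a section is functorial in the coefficients, so $\alpha_v$ lifts to $\varprojlim_{m}H^2(X\otimes_k k_v,\mmu_m)$, hence $p(\alpha_v)\in\Br(X\otimes_k k_v)$ is infinitely divisible and therefore orthogonal to $\Pic(X\otimes_k k_v)_{\mathrm{tors}}$ for the local pairing~(\ref{eq:acclocal}); Lemma~\ref{lem:compatible} then transports this to $\mylangle\alpha_v,c(\gamma_{v,1})\myrangle=0$. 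This divisibility of $p(\alpha_v)$ is what replaces the unavailable local algebraicity at $v\mid N$, and it is the point your proposal needs to incorporate.
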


\begin{proof}
Soit $v \in \Omega$.
Si~$v$ ne divise pas~$N$, on a même $\alpha_v=c(D_v)$.  Supposons donc que~$v$ divise~$N$ et reprenons les notations $\gamma_{v,0}$, $\gamma_{v,1}$ apparaissant dans l'hypothèse~(T),
de sorte que $\gamma_v = \gamma_{v,0} + c(\gamma_{v,1})$.

\begin{lemma}
\label{lem:annule}
Soient $x \in H^2(X \otimes_k k_v,\mmu_N)$ et $y \in H^2(k_v,\mmu_N)$.  Si~$x$ s'annule dans $H^2(X\otimes_k \kbar_v,\mmu_N)$, alors \kern.15ex$\mylangle x,y \myrangle=0$.
\end{lemma}

\begin{proof}
Notons $(F^iH^{2n})_{i \in \Z}$ la filtration décroissante de $H^{2n}(X \otimes_k k_v, \mmu_N^{\otimes n})$ induite par la suite spectrale de Hochschild--Serre.
Les hypothèses du lemme signifient que $x \in F^1H^2$ et $y \in F^2H^2$.
Or le cup-produit respecte cette filtration (cf.~\cite[Prop.~6.2]{jannsen});
d'où $x \smile y \in F^3H^4$, ce qui se traduit par l'égalité \kern.15ex$\mylangle x,y\myrangle=0$.
\qed\end{proof}

Le construction de la classe de cycle associée à une section du groupe fondamental étant fonctorielle par rapport aux coefficients,
la classe $\alpha_v$ se relève dans
$\varprojlim_{m \geq 1} H^2(X \otimes_k k_v,\mmu_m)$
(cf.~\cite[2.6]{ew}).
Par conséquent $p(\alpha_v) \in \Br(X \otimes_k k_v)$ est infiniment divisible. En particulier $p(\alpha_v)$ est orthogonal à $\Pic(X \otimes_k k_v)_{\mathrm{tors}}$
pour l'accouplement~(\ref{eq:acclocal}).
Or $\gamma_{v,1} \in \Pic(X \otimes_k k_v)_{\mathrm{tors}}$ (voir l'hypothèse~(T));
le lemme~\ref{lem:compatible} implique donc que \kern.15ex$\mylangle \alpha_v, c(\gamma_{v,1})\myrangle=0$.

D'autre part, il résulte du lemme~\ref{lem:compatible} que
\kern.15ex$\mylangle c(D_v), c(\gamma_{v,1})\myrangle=0$, puisque $p \circ c=0$.
Enfin, le lemme~\ref{lem:annule} entraîne que
\kern.15ex$\mylangle \alpha_v - c(D_v), \gamma_{v,0}\myrangle=0$.
Vu la décomposition
$$\mylangle \alpha_v - c(D_v), \gamma_{v,0} + c(\gamma_{v,1}) \myrangle=
\mylangle \alpha_v - c(D_v), \gamma_{v,0}\myrangle + \mylangle \alpha_v, c(\gamma_{v,1}) \myrangle - \mylangle c(D_v), c(\gamma_{v,1})\myrangle\rlap{\text{,}}$$
la proposition~\ref{prop:laprop} est maintenant établie.
\qed\end{proof}

Nous sommes en position d'achever la preuve du théorème~\ref{th:critere}.
D'après la loi de réciprocité globale, la somme des invariants de
$\mylangle \alpha, \gamma \myrangle \in \Br(k)$ est nulle.  Par conséquent
 $\sum_{v \in \Omega}\inv_v \mylangle \alpha_v, \gamma_v \myrangle=0$.
Il s'ensuit, grâce à la proposition~\ref{prop:laprop},
que $\sum_{v \in \Omega} \inv_v \mylangle c(D_v), \gamma_v \myrangle=0$,
ce qui signifie,
compte tenu du lemme~\ref{lem:compatible}, que la famille
$(D_v)_{v \in \Omega}$ est orthogonale à $p(\gamma)$ pour l'accouplement~(\ref{eq:accbm}).
L'élément $\gamma \in \Gamma$ étant quelconque, l'hypothèse~(BM) est ainsi contredite.
\qed\end{proofthcritere}

\section{Les courbes de \texorpdfstring{Reichardt--Lind}{Reichardt-Lind} et de Schinzel}
\label{par:demthintro}

Il existe un morphisme évident de la courbe de Schinzel vers la courbe de Reichardt--Lind, à savoir
$[x:y:z] \mapsto [xz : x^2: y^2+4z^2: z^2]$.
Or la suite exacte~(\ref{eq:suitefond}) dépend fonctoriellement de~$X$. Pour établir le théorème~\ref{th:intro},
il suffit donc de traiter le cas de la courbe de Reichardt--Lind.

Soit~$X$ la courbe de Reichardt--Lind, définie comme la compactification lisse de la courbe affine
d'équation $2y^2=w^4-17$ sur~$\Q$.  Pour montrer que la suite~(\ref{eq:suitefond}) n'est pas scindée, nous allons
appliquer le théorème~\ref{th:critere} avec $N=2$ et $\Gamma=\{0,\gamma\}$ où $\gamma \in H^2(X,\mkern1mu\Z/2\Z)$ est une classe à préciser.
Le reste du présent paragraphe est consacré à la construction de~$\gamma$ et à la vérification des hypothèses du théorème~\ref{th:critere}.

Le diviseur de la fonction rationnelle~$y$ sur~$X$ s'écrit $P-2Q$, où $P,Q \in X$ sont des points
fermés de degrés respectifs~$4$ et~$2$ sur~$\Q$.  Posons $V=X \setminus \{P\}$.
Le diviseur de~$y$ sur~$V$ étant un double, le revêtement de~$V$ obtenu en extrayant une racine
carrée de~$y$ est étale.  Notons $[y]$ sa classe dans $H^1(V,\mkern1mu\Z/2\Z)$.  Notons d'autre part $[17]$ l'image de~$17$ par l'application
naturelle
$\Q^\star/\Q^{\star 2}=H^1(\Q,\mkern1mu\Z/2\Z) \to H^1(V,\mkern1mu\Z/2\Z)$.

\begin{lemma}
\label{lem:inject}
La flèche de restriction $H^2(X,\mkern1mu\Z/2\Z) \to H^2(V,\mkern1mu\Z/2\Z)$ est injective.
\end{lemma}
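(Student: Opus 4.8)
The plan is to compute the kernel of the restriction map $j^\star$ by means of the localization (Gysin) exact sequence attached to the closed point~$P$, where $j:V\hookrightarrow X$ denotes the open immersion and $i:P\hookrightarrow X$ the complementary closed immersion. Since~$\Q$ has characteristic~$0$, the closed point~$P$ --- the spectrum of a field finite of degree~$4$ over~$\Q$ --- is a regular closed subscheme of pure codimension~$1$ of the regular scheme~$X$. Absolute purity then provides an exact sequence $H^0(P,\mkern1mu\Z/2\Z)\to H^2(X,\mkern1mu\Z/2\Z)\to H^2(V,\mkern1mu\Z/2\Z)$ in which the second arrow is~$j^\star$ and the first is the Gysin pushforward~$i_\star$ (no Tate twist intervenes, as $\mmu_2=\Z/2\Z$). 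It therefore suffices to show that $i_\star$ is the zero map.

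Now~$P$, being the spectrum of a field, is connected, so $H^0(P,\mkern1mu\Z/2\Z)=\Z/2\Z$ and $i_\star$ is determined by $i_\star(1)$, which is the cycle class of~$P$ --- the image of the class of~$\mathcal{O}_X(P)$ under the map $c:\Pic(X)\to H^2(X,\mmu_2)$ induced by the Kummer sequence $1\to\mmu_2\to\Gm\to\Gm\to1$. Since~$c$ is a group homomorphism whose target is killed by~$2$, and since the divisor of~$y$ on~$X$ equals $P-2Q$ --- so that $\mathcal{O}_X(P)\simeq\mathcal{O}_X(Q)^{\otimes 2}$ in $\Pic(X)$ --- one obtains $c(\mathcal{O}_X(P))=2\,c(\mathcal{O}_X(Q))=0$. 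Hence $i_\star=0$, and $j^\star$ is injective.

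I do not anticipate a genuine obstacle in this argument: the only two points requiring some care are the use of purity in the correct form for a closed point of the curve~$X$ and the standard identification of $i_\star(1)$ with the cycle class of~$P$. The substantive input is the relation $\operatorname{div}(y)=P-2Q$ recalled above, which exhibits $\mathcal{O}_X(P)$ as a square in $\Pic(X)$ and thereby forces its class in $H^2(X,\mkern1mu\Z/2\Z)$ to vanish --- this being precisely why one removes~$P$, and not~$Q$, from~$X$ in the definition of~$V$.
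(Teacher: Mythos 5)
Your proof is correct and rests on the same localization exact sequence at~$P$ and the same essential input $\operatorname{div}(y)=P-2Q$ as the paper's. The only (equivalent, by exactness) difference is where you cut the sequence: you show the Gysin map $i_\star:H^0(P,\mkern1mu\Z/2\Z)\to H^2(X,\mkern1mu\Z/2\Z)$ vanishes because $P\sim 2Q$ makes the cycle class of~$P$ a multiple of~$2$, whereas the paper shows the preceding boundary map $H^1(V,\mkern1mu\Z/2\Z)\to H^2_P(X,\mkern1mu\Z/2\Z)$ is surjective because the Kummer class $[y]$ has residue $1$ at~$P$.
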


\begin{proof}
En effet, dans la suite exacte de localisation
$$
\xymatrix{
H^1(V,\mkern1mu\Z/2\Z) \ar[r] & H^2_P(X,\mkern1mu\Z/2\Z) \ar[r] & H^2(X,\mkern1mu\Z/2\Z) \ar[r] & H^2(V,\mkern1mu\Z/2\Z)\rlap{\text{,}}
}
$$
la première flèche est surjective puisque
$H^2_P(X,\mkern1mu\Z/2\Z) = H^0(P,\mkern1mu\Z/2\Z) = \Z/2\Z$ et que $[y] \in H^1(V,\mkern1mu\Z/2\Z)$ s'envoie sur $1 \in \Z/2\Z$.
\qed\end{proof}

Le résidu de $[y] \smile [17] \in H^2(V,\mkern1mu\Z/2\Z)$ en~$P$ est nul puisque~$17$ est un carré dans~$\Q(P)$.  Par conséquent $[y] \smile [17]$ est la restriction
d'un élément de $H^2(X,\mkern1mu\Z/2\Z)$.  D'après le lemme~\ref{lem:inject},
celui-ci est uniquement déterminé. Nous le noterons~$\gamma$.

L'image de $\gamma$ dans $\Br(X)$ est la classe de l'algèbre de quaternions $(y,17)$.  Il est bien connu que
cette classe est responsable d'une obstruction de Brauer--Manin à l'existence d'un diviseur de degré~$1$ sur~$X$  et que
$X(\Q_v)\neq\emptyset$ pour toute place~$v$ de~$\Q$ (cf.~\cite[\textsection5]{stixbm}, où l'obstruction de Brauer--Manin à l'existence d'un point
rationnel est discutée; noter que  $\Pic^1(X \otimes_\Q K)=X(K)$ pour toute extension $K/\Q$ puisque~$X$ est une courbe de genre~$1$; ainsi l'obstruction de Brauer--Manin
à l'existence d'un diviseur de degré~$1$ sur~$X$ est équivalente à l'obstruction de Brauer--Manin à l'existence d'un point rationnel).
L'hypothèse~(BM) du paragraphe~\ref{par:critere} est donc satisfaite.  Il reste à vérifier l'hypothèse~(T).

Afin de simplifier les notations, posons $\Xbarre = X \otimes_\Q \Qbarre$, $\Vbarre = V \otimes_\Q \Qbarre$ et enfin
$$H^2(X \otimes_\Q K,\mkern1mu\Z/2\Z)^0=\Ker\left(H^2(X \otimes_\Q K,\mkern1mu\Z/2\Z) \to H^2(X \otimes_\Q \Kbarre,\mkern1mu\Z/2\Z)\right)$$
pour toute extension $K/\Q$, où~$\Kbarre$ désigne une clôture algébrique de~$K$.

\begin{proposition}
\label{prop:degzero}
L'image de $\gamma$ dans $H^2(\Xbarre,\mkern1mu\Z/2\Z)$ est nulle.
\end{proposition}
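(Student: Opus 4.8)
The plan is to show that $\gamma$ dies in $H^2(\Xbarre,\mkern1mu\Z/2\Z)$ by working over the larger open curve $\Vbarre$, where $\gamma$ is visibly a cup product of two classes coming from degree $1$, and then using the injectivity of the restriction established in Lemma~\ref{lem:inject} (applied after base change to $\Qbarre$, which is harmless since the fibre $P$ stays a disjoint union of copies of $\Spec\Qbarre$). First I would observe that over $\Qbarre$ the class $[17]$ becomes trivial in $H^1(\Vbarre,\mkern1mu\Z/2\Z)$, because $17$ is a square in $\Qbarre$. Since $\gamma|_V = [y]\smile[17]$ by construction, and cup product is bilinear, the image of $\gamma$ in $H^2(\Vbarre,\mkern1mu\Z/2\Z)$ equals $[y]\smile 0 = 0$.

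Next I would transfer this vanishing from $V$ back to $X$ over $\Qbarre$. For that I need the restriction map $H^2(\Xbarre,\mkern1mu\Z/2\Z)\to H^2(\Vbarre,\mkern1mu\Z/2\Z)$ to be injective. This follows from exactly the same localisation-sequence argument as in Lemma~\ref{lem:inject}: the point $P\subset X$ becomes, after $\otimes_\Q\Qbarre$, a disjoint union of four $\Qbarre$-points $P_1,\dots,P_4$ of $\Xbarre$, so $H^2_{P\otimes\Qbarre}(\Xbarre,\mkern1mu\Z/2\Z)=\bigoplus_{j} H^0(P_j,\mkern1mu\Z/2\Z)=(\Z/2\Z)^4$, and the boundary map $H^1(\Vbarre,\mkern1mu\Z/2\Z)\to H^2_{P\otimes\Qbarre}(\Xbarre,\mkern1mu\Z/2\Z)$ is still surjective because $[y]$, restricted near each $P_j$, is a local uniformiser to an odd power (the divisor of $y$ on $X$ is $P-2Q$, so $y$ has a simple zero along each geometric point of $P$) and hence has residue $1$ in each of the four factors. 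In fact it suffices to know the image of $[y]$ hits a generator in at least enough factors, but here it is the full diagonal-type generator in each, so the first map in the localisation sequence $H^1(\Vbarre)\to H^2_{P\otimes\Qbarre}(\Xbarre)$ is onto, whence $H^2(\Xbarre,\mkern1mu\Z/2\Z)\hookrightarrow H^2(\Vbarre,\mkern1mu\Z/2\Z)$.

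Combining the two steps: $\gamma_{\Xbarre}$ maps to $0$ in $H^2(\Vbarre,\mkern1mu\Z/2\Z)$ under an injective map, so $\gamma_{\Xbarre}=0$, which is the assertion of Proposition~\ref{prop:degzero}.

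\medskip

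\emph{Where the difficulty lies.} The genuinely substantive point is the surjectivity of $H^1(\Vbarre,\mkern1mu\Z/2\Z)\to H^2_{P\otimes\Qbarre}(\Xbarre,\mkern1mu\Z/2\Z)$, i.e.\ checking that the residue/ramification of $[y]$ along each of the geometric points above $P$ is nontrivial. This is where one must use the concrete geometry of the Reichardt--Lind curve (the equation $2y^2=w^4-17$, the shape $\operatorname{div}(y)=P-2Q$ with $\deg P=4$); everything else is formal manipulation of the localisation sequence and bilinearity of cup product. One mild subtlety to watch is that the purity isomorphism $H^2_{P_j}(\Xbarre,\mkern1mu\Z/2\Z)\cong H^0(P_j,\mkern1mu\Z/2\Z)$ already incorporates a Tate twist by $\mmu_2\cong\Z/2\Z$, so no orientation issue arises at the prime $2$; and the identification of the boundary of $[y]$ with its valuation mod $2$ is the standard compatibility of the localisation sequence with the tame symbol, which the paper uses implicitly in Lemma~\ref{lem:inject} as well.
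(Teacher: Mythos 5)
Your argument breaks down at its key step: the claimed injectivity of the restriction $H^2(\Xbarre,\mkern1mu\Z/2\Z)\to H^2(\Vbarre,\mkern1mu\Z/2\Z)$ is false. Over $\Qbarre$ the target is zero ($\Vbarre$ is a smooth affine curve over an algebraically closed field, so it has cohomological dimension $1$), while the source is $H^2(\Xbarre,\mkern1mu\Z/2\Z)\cong\Z/2\Z$ by the degree isomorphism for a projective curve. Concretely, your surjectivity argument for the boundary map does not work: after base change $P$ splits into four $\Qbarre$-points, $H^2_{P\otimes\Qbarre}(\Xbarre,\mkern1mu\Z/2\Z)\cong(\Z/2\Z)^4$, and the fact that $[y]$ maps to the diagonal element $(1,1,1,1)$ does not generate this group. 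In fact the image of $H^1(\Vbarre,\mkern1mu\Z/2\Z)$ is exactly the index-two subgroup of tuples with even sum: by exactness of the localisation sequence, the cokernel of the boundary map is $\Ker\bigl(H^2(\Xbarre)\to H^2(\Vbarre)\bigr)=H^2(\Xbarre,\mkern1mu\Z/2\Z)=\Z/2\Z$, the quotient being the sum (degree) map. This is the essential difference with Lemma~\ref{lem:inject} over $\Q$, where $P$ is a \emph{single} closed point of degree $4$ and $H^2_P(X,\mkern1mu\Z/2\Z)=\Z/2\Z$ is one copy, which $[y]$ does hit.

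Consequently the vanishing of $\gamma$ in $H^2(\Vbarre,\mkern1mu\Z/2\Z)$ is vacuous (everything vanishes there) and carries no information about the image of $\gamma$ in $H^2(\Xbarre,\mkern1mu\Z/2\Z)$. The content of the proposition is precisely that the ``degree'' of $\gamma$ in $H^2(\Xbarre,\mkern1mu\Z/2\Z)\cong\Z/2\Z$ is zero, and this cannot be detected by restricting to the open curve. The paper's proof has to do genuinely more work: it introduces the Galois-invariant class $[w^2-\sqrt{17}]\in H^1(\Xbarre,\mkern1mu\Z/2\Z)$, proves (Lemma~\ref{lem:cocycle}) that $[w^2-\sqrt{17}]\times[-2]$ and $[y]\times[17]$ have the same image in $H^1(\Q,H^1(\Vbarre,\mkern1mu\Z/2\Z))$, and then runs a diagram chase in the Hochschild--Serre spectral sequences for $X$ and $V$ to produce a class $\gamma\mkern2mu'\in H^2(X,\mkern1mu\Z/2\Z)^0$ whose restriction to $V$ is $[y]\smile[17]$; only then does Lemma~\ref{lem:inject} --- over $\Q$, where it is valid --- give $\gamma=\gamma\mkern2mu'$. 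You would need to supply an argument of this kind; the route through $\Vbarre$ alone cannot succeed.
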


\begin{proof}
Le diviseur de la fonction rationnelle $w^2 - \sqrt{17}$ sur $\Xbarre$ est un double; celle-ci définit donc une classe $[w^2-\sqrt{17}] \in H^1(\Xbarre,\mkern1mu\Z/2\Z)$.
Cette classe est invariante sous l'action de $\Gal(\Qbarre/\Q)$ puisque $(w^2-\sqrt{17})(w^2+\sqrt{17})=w^4-17=2y^2$ est un carré dans $\Qbarre(X)$.

\begin{lemma}
\label{lem:cocycle}
Les images de $[w^2-\sqrt{17}] \times [-2]$ et de $[y] \times [17]$ par le cup-produit
\begin{align}
\label{eq:cupprod}
H^0(\Q,H^1(\Vbarre,\mkern1mu\Z/2\Z)) \times H^1(\Q,\mkern1mu\Z/2\Z) \to H^1(\Q,H^1(\Vbarre,\mkern1mu\Z/2\Z))
\end{align}
coïncident.
\end{lemma}

\begin{proof}
Il suffit de montrer que le cocycle $a:\Gal(\Qbarre/\Q) \to H^1(\Vbarre,\mkern1mu\Z/2\Z)$
défini par
$a(\sigma) = \chi_{-2}(\sigma)[w^2-\sqrt{17}] - \chi_{17}(\sigma)[y]$
est un cobord, où $\chi_q : \Gal(\Qbarre/\Q) \to \Z/2\Z$ désigne le caractère quadratique associé à $q \in \Q^{\star}$.
Soit $f = w^2 - \sqrt{17} - \sqrt{-2}y$.
Le~diviseur de~$f$ sur~$\Vbarre$ est un double. D'où une classe $[f] \in H^1(\Vbarre,\mkern1mu\Z/2\Z)$.
On vérifie aisément que $a(\sigma)=\sigma[f]-[f]$ pour tout~$\sigma$
tel que l'un au moins  de $\chi_{-2}(\sigma)$ et de $\chi_{17}(\sigma)$ soit nul.
Or~$a$ est un cocycle; par conséquent
 $a(\sigma)=\sigma[f]-[f]$ pour tout $\sigma \in \Gal(\Qbarre/\Q)$ et~$a$ est donc un cobord.
\qed\end{proof}

Comme $H^2(\Vbarre,\mkern1mu\Z/2\Z)=0$,
la suite spectrale de Hochschild--Serre et les flèches de restriction fournissent un diagramme commutatif
\begin{align*}
\xymatrix@C=4ex{
H^2(X,\mkern1mu\Z/2\Z)^0 \ar[d] \ar[r]^(.42){\delta_X} & H^1(\Q,H^1(\Xbarre,\mkern1mu\Z/2\Z)) \ar[d] \ar[r] & H^3(\Q,H^0(\Xbarre,\mkern1mu\Z/2\Z)) \ar[d]^(.45)\wr \\
H^2(V,\mkern1mu\Z/2\Z) \ar[r]^(.42){\delta_V} & H^1(\Q,H^1(\Vbarre,\mkern1mu\Z/2\Z)) \ar[r]^d & H^3(\Q,H^0(\Vbarre,\mkern1mu\Z/2\Z))
}
\end{align*}
dont les lignes sont exactes et dont la flèche verticale de droite est un isomorphisme.
Cette suite spectrale étant compatible au cup-produit
(cf.~\cite[Prop.~6.2]{jannsen}), l'image de $[y] \times [17]$ par~(\ref{eq:cupprod}) est égale, à un signe près, à $\delta_V([y] \smile [17])$;
en particulier appartient-elle à $\Ker(d)$.
Il s'ensuit, grâce au lemme~\ref{lem:cocycle} et à une chasse au diagramme, que l'image de $[w^2-\sqrt{17}] \times [-2]$ par le cup-produit
$$
H^0(\Q,H^1(\Xbarre,\mkern1mu\Z/2\Z)) \times H^1(\Q,\mkern1mu\Z/2\Z) \to H^1(\Q,H^1(\Xbarre,\mkern1mu\Z/2\Z))
$$
s'écrit $\delta_X(\gamma\mkern2mu')$ pour un $\gamma\mkern2mu' \in H^2(X,\mkern1mu\Z/2\Z)^0$.
La flèche naturelle $\Ker(\delta_X) \to \Ker(\delta_V)$ est
surjective puisque tout élément de $\Ker(\delta_V)$ provient de $H^2(\Q,\Z/2\Z)$.
 Quitte à modifier~$\gamma\mkern2mu'$, on peut donc supposer que la restriction de $\gamma\mkern2mu'$ à~$V$
coïncide avec $[y] \smile [17] \in H^2(V,\mkern1mu\Z/2\Z)$.
Le lemme~\ref{lem:inject} entraîne maintenant que~$\gamma=\gamma\mkern2mu'$. D'où finalement $\gamma \in H^2(X,\mkern1mu\Z/2\Z)^0$.
\qed\end{proof}

\begin{proposition}
\label{prop:noyaurest}
Pour toute extension $K/\Q$, le noyau de la flèche de restriction
\begin{align}
\label{eq:rest}
H^2(X \otimes_\Q K,\mkern1mu\Z/2\Z)^0 \to H^2(V \otimes_\Q K,\mkern1mu\Z/2\Z)
\end{align}
est contenu dans
$c(\Pic(X \otimes_\Q K)_{\mathrm{tors}})$,
où~$c$ désigne l'application classe de cycle.
\end{proposition}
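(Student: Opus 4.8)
\emph{Esquisse de preuve.} L'id\'ee est d'exploiter la suite exacte de localisation associ\'ee \`a l'immersion ferm\'ee $P \hookrightarrow X$, conjugu\'ee \`a la puret\'e. \'Ecrivons $P \otimes_\Q K = \coprod_j \Spec(L_j)$, r\'eunion disjointe de spectres d'extensions finies s\'eparables $L_j/K$ (leur nombre et les degr\'es $[L_j:K]$ \'etant gouvern\'es par la factorisation de $T^4 - 17$ sur~$K$). Comme $X \otimes_\Q K$ et $P \otimes_\Q K$ sont r\'eguliers et que $P \otimes_\Q K$ est de codimension~$1$, la puret\'e fournit un isomorphisme $H^2_{P \otimes_\Q K}(X \otimes_\Q K, \Z/2\Z) \cong H^0(P \otimes_\Q K, \Z/2\Z)$, compatible aux classes de cycles; la suite exacte de localisation identifie alors le noyau de la fl\`eche de restriction $H^2(X \otimes_\Q K, \Z/2\Z) \to H^2(V \otimes_\Q K, \Z/2\Z)$ au sous-groupe engendr\'e par les classes de cycles $c([P_j])$ des composantes~$P_j$, vues comme points ferm\'es de $X \otimes_\Q K$ de degr\'e $[L_j:K]$. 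La fl\`eche compos\'ee $H^2(X \otimes_\Q K, \Z/2\Z) \to H^2(X \otimes_\Q \Kbarre, \Z/2\Z) = \Z/2\Z$ envoyant $c(D)$ sur $\deg(D) \bmod 2$, intersecter ce noyau avec $H^2(X \otimes_\Q K, \Z/2\Z)^0$ revient \`a se restreindre aux diviseurs~$D$ support\'es par $P \otimes_\Q K$ de degr\'e pair. Il suffit donc de v\'erifier que, pour un tel~$D$, on a $c(D) \in c(\Pic(X \otimes_\Q K)_{\mathrm{tors}})$.

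L'\'etape suivante est l'observation \'el\'ementaire suivante: le diviseur de la fonction $w - w_0$, pour un point g\'eom\'etrique $(w_0, 0)$ de~$P$ (de sorte que $w_0^4 = 17$), vaut $2(w_0,0) - Q'$ sur $X \otimes_\Q \Kbarre$, o\`u~$Q'$ d\'esigne la somme des deux points \`a l'infini; ainsi $2R \sim Q$ pour tout point g\'eom\'etrique~$R$ de~$P$, et en particulier $[P \otimes_\Q K] = 2[Q \otimes_\Q K]$ (ce que l'on lit aussi sur le fait que le diviseur de~$y$ vaut $P - 2Q$). Par cons\'equent, si~$D$ est un diviseur support\'e par $P \otimes_\Q K$ de degr\'e pair et $M = \tfrac{1}{2}\deg(D)$, le diviseur $D - M(Q \otimes_\Q K)$, rationnel sur~$K$ et de degr\'e~$0$, devient lin\'eairement \'equivalent \`a une classe de $2$-torsion apr\`es extension \`a $\Kbarre$; le th\'eor\`eme~90 de Hilbert implique que sa classe dans $\Pic(X \otimes_\Q K)$ est d\'ej\`a de $2$-torsion. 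D\`es lors $[D] \equiv M[Q \otimes_\Q K]$ modulo $\Pic(X \otimes_\Q K)_{\mathrm{tors}}$, donc $c(D) = M \cdot c([Q \otimes_\Q K])$ modulo $c(\Pic(X \otimes_\Q K)_{\mathrm{tors}})$. Lorsque $\deg(D) \equiv 0 \pmod 4$ l'entier~$M$ est pair et l'on a gagn\'e; ceci r\`egle d\'ej\`a le cas o\`u $P \otimes_\Q K$ est irr\'eductible, tout diviseur support\'e par celui-ci \'etant alors de degr\'e divisible par~$4$. Reste le cas $\deg(D) \equiv 2 \pmod 4$, qui force $P \otimes_\Q K$ \`a \^etre r\'eductible, et o\`u il reste \`a montrer que $c([Q \otimes_\Q K])$ appartient lui-m\^eme \`a $c(\Pic(X \otimes_\Q K)_{\mathrm{tors}})$, c'est-\`a-dire que $[Q \otimes_\Q K]$ est congru \`a une classe de torsion modulo $2\Pic(X \otimes_\Q K)$.

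Pour conclure, j'analyserais la factorisation de $T^4 - 17$ sur~$K$. Si celle-ci poss\`ede une racine $\alpha \in K$, alors $(\alpha, 0)$ est un $K$-point de~$X$, d'o\`u $[Q \otimes_\Q K] = 2[(\alpha,0)]$ (toujours par $2R \sim Q$ et le th\'eor\`eme~90 de Hilbert) et $c([Q \otimes_\Q K]) = 0$; ceci couvre tout cas o\`u $P \otimes_\Q K$ poss\`ede une composante de degr\'e impair, c'est-\`a-dire o\`u $\Pic^1(X \otimes_\Q K) \neq \emptyset$ gr\^ace \`a un tel point. Il ne reste que la possibilit\'e o\`u $T^4 - 17$ est produit de deux polyn\^omes irr\'eductibles de degr\'e~$2$ sur~$K$, de sorte que $P \otimes_\Q K = P_1 \sqcup P_2$ avec $P_1$ et $P_2$ de degr\'e~$2$; le diviseur sur $X \otimes_\Q K$ de~$g(w)$, o\`u~$g$ d\'esigne l'un des deux facteurs quadratiques, vaut alors $2P_1 - 2(Q \otimes_\Q K)$, si bien que $[P_1] - [Q \otimes_\Q K]$ est de $2$-torsion. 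Le point est que, sur un tel corps~$K$ --- qui contient $\sqrt{17}$, ou bien $\sqrt{-17}$ ainsi qu'une racine quatri\`eme de $-68$ ---, la classe $(y,17)$ responsable de l'obstruction de Brauer--Manin sur~$\Q$ ne fait plus obstruction \`a l'existence d'un diviseur de degr\'e~$1$, de sorte que $\Pic^1(X \otimes_\Q K) \neq \emptyset$; en combinant ceci \`a la relation pr\'ec\'edente et \`a un argument de $2$-descente, on en d\'eduit que $[P_1]$, donc $[Q \otimes_\Q K]$, est le double d'une classe de diviseur modulo torsion, ce qui ach\`eve la v\'erification. C'est cette derni\`ere \'etape --- \'etablir que $[Q \otimes_\Q K] \in 2\Pic(X \otimes_\Q K) + \Pic(X \otimes_\Q K)_{\mathrm{tors}}$ sur les corps d\'ecomposant $T^4 - 17$ en deux facteurs quadratiques --- que je m'attends \`a voir constituer l'obstacle principal, le reste \'etant formel.
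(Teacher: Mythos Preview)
Your opening reduction via the localization sequence and purity is correct and matches the paper's. The divergence comes afterwards, and it leads you into a difficulty that the paper never faces.

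The paper's proof is three sentences. Its single geometric observation is that $P$ is the branch locus of the hyperelliptic double cover $w:X\to\P^1_\Q$; choosing a geometric point of $P$ as origin therefore makes $P\otimes_\Q\Kbar$ exactly the $2$-torsion subgroup of the elliptic curve $X\otimes_\Q\Kbar$. It follows at once that every degree-$0$ divisor on $X\otimes_\Q K$ supported on $P\otimes_\Q K$ is torsion in $\Pic(X\otimes_\Q K)$, and the paper concludes by invoking localization to say that the cycle classes of such divisors generate the kernel. Your relation $2R\sim Q$ encodes the same fact (since $Q_{\Kbar}\sim 2O$ for $O\in P_{\Kbar}$), but by keeping $Q$ in the picture you manufacture an auxiliary problem --- showing $[Q\otimes_\Q K]\in 2\,\Pic(X\otimes_\Q K)+\Pic(X\otimes_\Q K)_{\mathrm{tors}}$ when $M$ is odd --- which the paper's formulation sidesteps entirely.

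Your handling of that auxiliary problem in the ``two irreducible quadratics'' case is where the argument genuinely fails. The field $K$ is an \emph{arbitrary} extension of~$\Q$, so Brauer--Manin reasoning is simply unavailable: there are no ad\`eles, no reciprocity law, no meaning to ``$(y,17)$ ne fait plus obstruction''. Even if one restricts to number fields, the vanishing of a single Brauer class's contribution does not imply $\Pic^1(X\otimes_\Q K)\neq\emptyset$; and even granting $\Pic^1(X\otimes_\Q K)\neq\emptyset$, it does not follow that $[Q\otimes_\Q K]$ lies in $2\,\Pic+\Pic_{\mathrm{tors}}$, since for $x\in X(K)$ one has $[Q\otimes_\Q K]-2[x]=[\iota(x)]-[x]\in\Pic^0$, which need not be torsion. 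The phrase ``un argument de $2$-descente'' is a placeholder, not an argument. If you want to repair this, drop the detour through~$Q$ and argue directly, as the paper does, with degree-$0$ divisors supported on $P\otimes_\Q K$.
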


\begin{proof}
Le lieu de ramification du morphisme $X \to \P^1_\Q$, $(y,w)\mapsto w$ est~$P$, qui est de degré~$4$ sur~$\Q$.
Par conséquent, le choix d'un point géométrique de~$X$ au-dessus de~$P$ munit $X \otimes_\Q\Kbarre$ d'une structure de courbe elliptique
dont le sous-groupe de $2$\nobreakdash-torsion est $P \otimes_\Q \Kbarre$.
Les diviseurs de degré~$0$ sur $X \otimes_\Q K$ supportés
sur $P \otimes_\Q K$ sont donc tous de torsion dans $\Pic(X \otimes_\Q K)$.
Or leurs classes de cycles dans $H^2(X \otimes_\Q K,\mkern1mu\Z/2\Z)^0$ engendrent le noyau de~(\ref{eq:rest}), en vertu de la suite exacte de localisation.
\qed\end{proof}

L'image de $\gamma$ dans $H^2(V \otimes_\Q \Q_2,\mkern1mu\Z/2\Z)$ est nulle
puisque~$17$ est un carré dans~$\Q_2$.
Les propositions~\ref{prop:degzero} et~\ref{prop:noyaurest}
entraînent donc que
l'hypothèse~(T) est satisfaite. Ainsi le théorème~\ref{th:critere} permet-il de conclure la démonstration du théorème~\ref{th:intro}.

\begin{remark}
Soit~$X$ la courbe de Schinzel.  Nous avons montré que la suite~(\ref{eq:suitefond}) n'est pas scindée.
À l'instar de l'exemple de~\cite{haszaflynn}, la courbe de Schinzel vérifie une propriété plus forte:
même la suite exacte fondamentale abélianisée
\begin{align*}
\xymatrix{
1 \ar[r] & \pi_1(\Xbarre)^{\mathrm{ab}} \ar[r] & \pi_1(X)^{[\mathrm{ab}]} \ar[r] & \Gal(\Qbarre/\Q) \ar[r] & 1
}
\end{align*}
(obtenue en poussant~(\ref{eq:suitefond}) le long du morphisme d'abélianisation $\pi_1(\Xbarre) \to \pi_1(\Xbarre)^{\mathrm{ab}}$)
n'est pas scindée.  En effet, la suite exacte fondamentale abélianisée est tout aussi fonctorielle que la suite exacte fondamentale,
et les deux coïncident dans le cas de la courbe de Reichardt--Lind puisque celle-ci est de genre~$1$.
\end{remark}

\def\refname{Bibliographie}
\bibliographystyle{halpha}
\bibliography{sectionq}
\end{document}